\begin{document}
\numberwithin{equation}{section}

\def\1#1{\overline{#1}}
\def\2#1{\widetilde{#1}}
\def\3#1{\widehat{#1}}
\def\4#1{\mathbb{#1}}
\def\5#1{\frak{#1}}
\def\6#1{{\mathcal{#1}}}

\newcommand{\Lie}[1]{\ensuremath{\mathfrak{#1}}}
\newcommand{\LieL}{\Lie{l}}
\newcommand{\LieH}{\Lie{h}}
\newcommand{\LieG}{\Lie{g}}
\newcommand{\de}{\partial}
\newcommand{\R}{\mathbb R}
\newcommand{\FH}{{\sf Fix}(H_p)}
\newcommand{\al}{\alpha}
\newcommand{\tr}{\widetilde{\rho}}
\newcommand{\tz}{\widetilde{\zeta}}
\newcommand{\tk}{\widetilde{C}}
\newcommand{\tv}{\widetilde{\varphi}}
\newcommand{\hv}{\hat{\varphi}}
\newcommand{\tu}{\tilde{u}}
\newcommand{\tF}{\tilde{F}}
\newcommand{\debar}{\overline{\de}}
\newcommand{\Z}{\mathbb Z}
\newcommand{\C}{\mathbb C}
\newcommand{\Po}{\mathbb P}
\newcommand{\zbar}{\overline{z}}
\newcommand{\G}{\mathcal{G}}
\newcommand{\So}{\mathcal{S}}
\newcommand{\Ko}{\mathcal{K}}
\newcommand{\U}{\mathcal{U}}
\newcommand{\B}{\mathbb B}
\newcommand{\oB}{\overline{\mathbb B}}
\newcommand{\Cur}{\mathcal D}
\newcommand{\Dis}{\mathcal Dis}
\newcommand{\Levi}{\mathcal L}
\newcommand{\SP}{\mathcal SP}
\newcommand{\Sp}{\mathcal Q}
\newcommand{\A}{\mathcal O^{k+\alpha}(\overline{\mathbb D},\C^n)}
\newcommand{\CA}{\mathcal C^{k+\alpha}(\de{\mathbb D},\C^n)}
\newcommand{\Ma}{\mathcal M}
\newcommand{\Ac}{\mathcal O^{k+\alpha}(\overline{\mathbb D},\C^{n}\times\C^{n-1})}
\newcommand{\Acc}{\mathcal O^{k-1+\alpha}(\overline{\mathbb D},\C)}
\newcommand{\Acr}{\mathcal O^{k+\alpha}(\overline{\mathbb D},\R^{n})}
\newcommand{\Co}{\mathcal C}
\newcommand{\Hol}{{\sf Hol}(\mathbb H, \mathbb C)}
\newcommand{\Aut}{{\sf Aut}(\mathbb D)}
\newcommand{\D}{\mathbb D}
\newcommand{\id}{{\sf id}}
\newcommand{\oD}{\overline{\mathbb D}}
\newcommand{\oX}{\overline{X}}
\newcommand{\loc}{L^1_{\rm{loc}}}
\newcommand{\la}{\langle}
\newcommand{\ra}{\rangle}
\newcommand{\thh}{\tilde{h}}
\newcommand{\N}{\mathbb N}
\newcommand{\kd}{\kappa_D}
\newcommand{\Hr}{\mathbb H}
\newcommand{\ps}{{\sf Psh}}
\newcommand{\Hess}{{\sf Hess}}
\newcommand{\subh}{{\sf subh}}
\newcommand{\harm}{{\sf harm}}
\newcommand{\ph}{{\sf Ph}}
\newcommand{\tl}{\tilde{\lambda}}
\newcommand{\gdot}{\stackrel{\cdot}{g}}
\newcommand{\gddot}{\stackrel{\cdot\cdot}{g}}
\newcommand{\fdot}{\stackrel{\cdot}{f}}
\newcommand{\fddot}{\stackrel{\cdot\cdot}{f}}
\def\v{\varphi}
\def\Re{{\sf Re}\,}
\def\Im{{\sf Im}\,}

\newtheorem{theorem}{Theorem}[section]
\newtheorem{lemma}[theorem]{Lemma}
\newtheorem{proposition}[theorem]{Proposition}
\newtheorem{corollary}[theorem]{Corollary}

\theoremstyle{definition}
\newtheorem{definition}[theorem]{Definition}
\newtheorem{example}[theorem]{Example}

\theoremstyle{remark}
\newtheorem{remark}[theorem]{Remark}
\numberwithin{equation}{section}

\title[Growth Estimates]{growth estimates for pseudo-dissipative holomorphic maps in Banach spaces}
\author[F. Bracci]{Filippo Bracci$^\ast$}
\address{F. Bracci: Dipartimento Di Matematica\\
Universit\`{a} di Roma \textquotedblleft Tor Vergata\textquotedblright\ \\
Via Della Ricerca Scientifica 1, 00133 \\
Roma, Italy} \email{fbracci@mat.uniroma2.it}
\author[M. Elin]{Mark Elin}
\address{M. Elin: Department of Mathematics\\
ORT Braude College \\ 21982 Karmiel, Israel}
\email{mark$\_$elin@braude.ac.il}
\author[D. Shoikhet]{David Shoikhet}
\address{D. Shoikhet: Department of Mathematics\\
ORT Braude College \\ 21982 Karmiel, Israel}
\email{davs@braude.ac.il}
\date\today

\dedicatory{Dedicated to Prof. Simeon Reich for his 65th anniversary}


\keywords{Infinitesimal generators; growth estimates; bounded holomorphic functions}

\thanks{$^{*}$Partially supported by the ERC grant ``HEVO - Holomorphic Evolution Equations'' n. 277691.}

\begin{abstract}
In this paper we introduce a class of pseudo-dissipative
holomorphic maps which contains, in particular, the class of
infinitesimal generators of semigroups of holomorphic maps on the
unit ball of a complex Banach space. We give a growth estimate for
maps of this class. In particular, it follows that
pseudo-dissipative maps on the unit ball of (infinite-dimensional)
Banach spaces are bounded on each domain strictly contained inside
the ball. We also present some applications.
\end{abstract}

\maketitle

\section{Introduction}

Let $(X, \|\cdot\|)$ be a complex Banach space and let $\B:=\{z\in X : \|z\|<1\}$.

\begin{definition}[see \cite{FV, H}]
Let $h: \B \mapsto X$ be a holomorphic map. One says that it has
{\sl unit radius of boundedness} if it is bounded on each subset
strictly inside $\B$.
\end{definition}

The problem to verify whether a holomorphic map has unit radius of
boundedness arises in many aspects of infinite dimensional
holomorphy (see, for example, \cite{FV, H}) as well as in complex
dynamical systems (\cite{ARS, RS-97, RS}). In particular, it plays
a crucial role in the study of nonlinear numerical range and
spectrum of holomorphic maps \cite{H}, in the establishing of
exponential and product formulas for semigroups of holomorphic
maps \cite{RS-SD-96, RS-97}, and the study of flow invariance and
range conditions in the nonlinear analysis \cite{HRS, RS}. It was
specifically mentioned for the class of the so-called
semi-complete vector fields (or, infinitesimal generators) in the
study of their numerical range and Bloch radii \cite{HRS}.

Note that if $h$ is uniformly continuous on $\overline\B$, the
closure of $\B$, then the property of $h$ to be an infinitesimal
generator is equivalent to the nonlinear dissipativeness of $h$
\cite{HRS}.

In this note we consider a wider class of holomorphic maps on $\B$
and establish some growth estimates under weaker restrictions on
its numerical range.

For $\varphi \in X^\ast$ we use the notation $\langle v,
\varphi\rangle := \varphi(v)$. If $v\in X$, we denote by
$v^\ast\in X^\ast$ any element such that
\[
\Re \langle v, v^\ast \rangle =\|v\|^2=\|v^\ast\|^2.
\]
By the Hahn-Banach theorem such an element $v^\ast$ exists, but in general it is not unique. However, if $(X,\langle \cdot, \cdot\rangle)$ is a Hilbert space then $v^\ast$ is unique and it is defined by $v^\ast:=\langle \cdot, v\rangle$.

\begin{definition}\label{def_pd}
We say that a holomorphic map $h: \B \mapsto X$ is
{\sl pseudo-dissipative} if there is $\varepsilon>0$ such that the
convex hull of the set
\[
\Omega_{\varepsilon}(h):=\left\{\langle h(z),z^*\rangle,\
1-\varepsilon<\|z\|<1 \right\}
\]
is not the whole plane $\C$.
\end{definition}

\begin{remark}
If $h$ is uniformly continuous on $\overline\B$, then its
pseudo-dissipativeness actually means that the numerical range of
$h$ (see \cite{H}) lies in a half-plane.
\end{remark}

\begin{remark}
It is clear that the pseudo-dissipativeness of $h$ is equivalent
to the following condition: {\it there is $\varepsilon>0$ such
that
\begin{equation}\label{ineq_pd}
\Re e^{i\theta}\langle h(z), z^*\rangle \le a\|z\|^2 +b\left(1-
\|z\|^2 \right),\quad 1-\varepsilon<\|z\|<1,
\end{equation}
for some real $\theta,\ a$ and $b$.} For some  technical reasons
inequality \eqref{ineq_pd} is more convenient for our further
considerations.
\end{remark}

Let $A: X \mapsto X$ be a continuous linear operator. Then one
defines
\[
m(A):=\inf\{\Re \langle Av, v^\ast\rangle : \|v\|=1\}
\]
and the {\sl numerical radius} of $A$ as
\[
V(A):=\sup \{|\langle Av, v^\ast\rangle| : \|v\|=1\}.
\]

The main result of this paper is the following

\begin{theorem}\label{mainA}
Let $\B$ be the unit ball in a complex Banach space $X$. Let $F$
be a  pseudo-dissipative holomorphic map on $\B$, i.e., for some
real $a,b,\theta$ and $\varepsilon>0$, inequality~\eqref{ineq_pd}
holds for $z,\ 1-\varepsilon<\|z\|<1$. Then

\begin{itemize}
\item[(i)] inequality~\eqref{ineq_pd} holds with $b=\|F(0)\|$ and the
same $a$ and $\theta$ for all $z\in\B$;

\item[(ii)] $m(e^{i\theta}A)\le a$, where $A=DF(0)$;

\item[(iii)]  $F$ has unit radius of boundedness. Moreover, for
all $z\in \B$ the following estimate holds:
\begin{equation}\label{estimate}
\begin{split}
\|F(z) -F(0)\|&\leq   \|z\|\left(|a| +eV(e^{i\theta}A-a\cdot\id)
\right) \\
& + 4\|z\|^2 \|F(0)\|+ 8\|z\|^2
\frac{1-\|z\|\ln2}{(1-\|z\|)^2}\left(a-m(e^{i\theta}A) \right)  \\
& < 4\|F(0)\|\cdot \|z\|^2 + |a|\cdot
\frac{\alpha\|z\|}{(1-\|z\|)^2} + V(A)\cdot
\frac{\beta\|z\|}{(1-\|z\|)^2} \,,
\end{split}
\end{equation}
where $\alpha=\dfrac{8(e\ln2+\ln2+1-e)}{8\ln2-e-1}<3.8$ and
$\beta=\dfrac{8(e\ln2+2-e)}{8\ln2-e}<3.3\,$.
\end{itemize}
\end{theorem}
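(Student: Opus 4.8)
The plan is to reduce everything to one complex variable by slicing. Fix a unit vector $u\in X$ together with a support functional $u^\ast$ (so $\langle u,u^\ast\rangle=\|u\|^2=\|u^\ast\|^2=1$) and set $p(\zeta):=\langle F(\zeta u),u^\ast\rangle$, a holomorphic function on $\D$. Since $(\zeta u)^\ast=\1\zeta\,u^\ast$ is an admissible support functional for $\zeta u$, the inequality \eqref{ineq_pd} becomes, writing $\phi:=e^{i\theta}p$ and $c_0:=\phi(0)=e^{i\theta}\langle F(0),u^\ast\rangle$,
\[
\Re\big(\1\zeta\,\phi(\zeta)\big)\le a|\zeta|^2+b(1-|\zeta|^2),\qquad 1-\varepsilon<|\zeta|<1 .
\]
I would then expand $\phi(\zeta)=c_0+\zeta\phi_1(\zeta)$ with $\phi_1$ holomorphic and record the identity on $\D$
\[
\Re\big(\1\zeta\,\phi(\zeta)\big)=\underbrace{\Re(c_0\1\zeta)}_{=:H_0(\zeta)}+|\zeta|^2\underbrace{\Re\phi_1(\zeta)}_{=:H_1(\zeta)},
\]
where $H_0,H_1$ are harmonic on $\D$; the point is that this isolates the only non-harmonic factor, $|\zeta|^2$.

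Parts (i) and (ii) then follow from a single maximum-principle argument. For $1-\varepsilon<\rho<1$ the hypothesis on $|\zeta|=\rho$ reads $H_1+\rho^{-2}H_0\le a+b(1-\rho^2)/\rho^2$, so the harmonic function $\Xi_\rho:=H_1+\rho^{-2}H_0$ is bounded on $|\zeta|=\rho$ by the constant on the right. The maximum principle on $\{|\zeta|\le\rho\}$ and the limit $\rho\to1^-$ give $H_1(\zeta)\le a-H_0(\zeta)$ for all $\zeta\in\D$. Evaluating at $\zeta=0$ yields $\Re\big(e^{i\theta}\langle Au,u^\ast\rangle\big)=H_1(0)\le a$, and taking the infimum over unit $u$ proves (ii). Substituting $H_1\le a-H_0$ into the displayed identity gives $\Re(\1\zeta\,\phi)\le a|\zeta|^2+(1-|\zeta|^2)H_0(\zeta)$, and since $H_0(\zeta)\le|c_0|\le\|F(0)\|$ this is exactly (i) after passing to the supremum over $u$.

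For the growth estimate (iii) I would first extract quantitative one-variable information. The bound $H_1\le a-H_0$ says precisely that $G:=a-\phi_1-\overline{c_0}\zeta$ is holomorphic with $\Re G\ge0$ and $\Re G(0)=a-\Re\big(e^{i\theta}\langle Au,u^\ast\rangle\big)$, so the Riesz--Herglotz inequality bounds $|G(\zeta)|$, hence $|\phi_1(\zeta)|$, by a part measured by $|a|$, $\|F(0)\|$ and $V(e^{i\theta}A-a\cdot\id)$, together with a term of size $(a-m(e^{i\theta}A))\,(1+|\zeta|)/(1-|\zeta|)$. This controls the single scalar component $\langle F(\zeta u)-F(0),u^\ast\rangle=e^{-i\theta}\zeta\,\phi_1(\zeta)$, which grows only like $1/(1-|\zeta|)$. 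To reach the norm I would expand $F(\zeta u)-F(0)=\sum_{n\ge1}\Psi_n\zeta^n$ with $\Psi_n\in X$: the coefficient $\Psi_1=Au$ is estimated through the Banach-space inequality $\|B\|\le e\,V(B)$ with $B=e^{i\theta}A-a\cdot\id$, producing the bounded linear contribution $\|z\|\big(|a|+eV(e^{i\theta}A-a\cdot\id)\big)$ and, in particular, the constant $e$; the higher coefficients $\Psi_n$ ($n\ge2$) are recovered from the radial data in all directions by a polarization/Cauchy-coefficient argument with optimised radii, and their summation carries the factor $a-m(e^{i\theta}A)$, the genuine $1/(1-\|z\|)^2$ blow-up, and the constant $\ln2$. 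Separating the linear piece from the tail and simplifying the resulting rational functions then gives \eqref{estimate} with $\alpha<3.8$ and $\beta<3.3$.

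I expect the main obstacle to be exactly this last transfer together with the sharp bookkeeping of constants. The pseudo-dissipative hypothesis pins down only the support-functional component $\langle F(w),w^\ast\rangle$ at each point, so the full norm $\|F(z)-F(0)\|$ cannot be reached by any soft Hahn--Banach argument: one must reconstruct all components of the vector-valued Taylor coefficients $\Psi_n$ from this diagonal data, which is where the extra power of $1/(1-\|z\|)$ originates, and then optimise the auxiliary Cauchy radii and the summation so as to obtain the precise numerical constants $e$, $\ln2$, $\alpha$ and $\beta$.
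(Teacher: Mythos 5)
Your proposal is essentially correct in architecture and, for part (iii), converges to the same mechanism as the paper: coefficient bounds for the sliced scalar functions coming from a Herglotz-type positivity, transferred to norm bounds on the homogeneous Taylor polynomials by numerical-radius inequalities, and then summed. Two comparisons are worth making. First, for (i) and (ii) the paper takes a longer route: it proves in Lemma \ref{L_pd-g} that $G=e^{i\theta}F-a\cdot\id$ is an infinitesimal generator by a degree-theoretic (logarithmic residue) argument verifying the range condition on each slice, and then invokes the characterization \eqref{geninf} from \cite{ARS}. Your harmonic decomposition $\Re(\1\zeta\,\phi)=H_0+|\zeta|^2H_1$ together with the maximum principle applied to $H_1+\rho^{-2}H_0$ and the limit $\rho\to1^-$ proves the same inequalities directly and self-containedly; moreover your function $a-\phi_1(\zeta)-\overline{c_0}\zeta$ is exactly the Herglotz function $q$ of Remark \ref{disco} and Lemma \ref{poly-ort}, so you have in effect re-derived the representation $g(\zeta)=g(0)-\overline{g(0)}\zeta^2-\zeta q(\zeta)$ with $\Re q\ge0$ without citing \cite{ARS}. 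Second, what you actually need from $q$ is not the pointwise Riesz--Herglotz bound on $|q(\zeta)|$ but the Carath\'eodory coefficient inequality $|a_j|\le 2\Re q(0)$, since the norm estimate must be assembled degree by degree; the pointwise bound on the scalar slice, which grows only like $1/(1-|\zeta|)$, plays no role in the final estimate.

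The one place where your outline is not yet a proof is the transfer from the diagonal data $V(Q_j)=\sup_{\|v\|=1}|\la Q_j(v),v^\ast\ra|$ to the operator norms $\|Q_j\|$. You describe this as a ``polarization/Cauchy-coefficient argument with optimised radii,'' which is the right idea but not a proof; the precise statement needed is Harris' inequality $\|P_m\|\le m^{m/(m-1)}V(P_m)$ for $m\ge2$ together with $\|P_1\|\le eV(P_1)$ \cite{H}, and without these exact constants the numbers $e$, $\ln 2$ (which enters through the tangent-line bound $k_j\le k'(2)j+4-2k'(2)$ with $k'(2)=4(1-\ln2)$ for $k(x)=x^{x/(x-1)}$, used to sum the series), $\alpha<3.8$ and $\beta<3.3$ in \eqref{estimate} cannot be recovered. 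Note that naive polarization yields constants of the form $m^m/m!$ relating $\|P_m\|$ to $\sup_{\|v\|=1}\|P_m(v)\|$, which is a different and insufficient statement. Once Harris' inequality is quoted explicitly, your outline closes into essentially the paper's proof of (iii).
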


\begin{corollary}
Let $D$ be a domain in $X,\ \B\subset D$, and let $F: D\mapsto D$
be pseudo-dissipative on $\B$ such that $F(0)=0$ and $DF(0)=A$. If
operator $e^{i\theta}A-a\cdot\id$ is power bounded, then there is
a bounded neighborhood of the origin which is invariant under $F$.
\end{corollary}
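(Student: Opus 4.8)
The plan is to exploit both conclusions of Theorem~\ref{mainA} together with the power-boundedness hypothesis, and to produce the invariant set as a forward orbit of a small ball. First, since $F(0)=0$, part~(i) of Theorem~\ref{mainA} shows that the pseudo-dissipative inequality~\eqref{ineq_pd} holds on all of $\B$ with $b=\|F(0)\|=0$, i.e. $\Re e^{i\theta}\langle F(z),z^*\rangle\le a\|z\|^2$ for every $z\in\B$; and part~(iii) gives, for $z\in\B$,
\[
\|F(z)\|\le \|z\|\bigl(|a|+eV(e^{i\theta}A-a\cdot\id)\bigr)+8\|z\|^2\,\frac{1-\|z\|\ln2}{(1-\|z\|)^2}\bigl(a-m(e^{i\theta}A)\bigr).
\]
In particular $F$ is bounded on every ball $r\B$, $r<1$, so it suffices to exhibit a \emph{bounded} forward-invariant neighborhood. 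I would build it as $U:=\bigcup_{n\ge0}F^{n}(V)$ for a suitable small neighborhood $V$ of the origin: then automatically $F(U)\subseteq U$ and $V\subseteq U$, so the whole problem reduces to showing that the forward orbit of some small $V$ stays bounded, i.e. that the iterates $F^{n}$ are uniformly bounded near $0$.

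To control the iterates I would pass to an adapted norm. Writing $T:=e^{i\theta}A-a\cdot\id$ and $M:=\sup_{n\ge0}\|T^{n}\|<\infty$, set
\[
\|x\|_*:=\sup_{n\ge0}\|T^{n}x\|,\qquad \|x\|\le\|x\|_*\le M\|x\|,
\]
so that $\|\cdot\|_*$ is equivalent to $\|\cdot\|$ and $T$ is a non-expansion, $\|Tx\|_*\le\|x\|_*$. Since $DF(0)=A=e^{-i\theta}(T+a\cdot\id)$ and $DF^{n}(0)=A^{n}$, this renorming is the ingredient meant to keep the linearized orbit $A^{n}z$ under control in the $\|\cdot\|_*$-geometry. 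The role of the pseudo-dissipative inequality with $b=0$ is complementary: it bounds the radial component $\Re e^{i\theta}\langle F(z),z^*\rangle$ of $F(z)$ and thereby forces the genuinely nonlinear part of $F$ to point inward, so that the quadratic term appearing in estimate~(iii) does not accumulate. Combining the two, I would show that for $r$ small enough the $\|\cdot\|_*$-ball $V=\{\,\|x\|_*<r\,\}$ satisfies $\sup_{n\ge0}\|F^{n}(z)\|_*\le Cr$ for all $z\in V$.

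Granting this uniform bound, $U=\bigcup_{n\ge0}F^{n}(V)$ is contained in the bounded set $\{\,\|x\|_*\le Cr\,\}$, is a neighborhood of the origin, and satisfies $F(U)\subseteq U$, which is the assertion. The main obstacle is precisely the uniform-boundedness step of the previous paragraph: power-boundedness yields only a bound of the form $\|A^{n}\|_*\le\text{const}$ in the neutral, non-strictly-contracting regime, so one cannot simply iterate a Lipschitz contraction, and the quadratic remainder in~(iii) must be absorbed using the inward-pointing information supplied by pseudo-dissipativity. Making this cancellation quantitative along the \emph{entire} orbit, rather than for a single application of $F$, is the delicate point, and it is exactly there that the hypotheses $DF(0)=A$ with $e^{i\theta}A-a\cdot\id$ power bounded and the $b=0$ form of~\eqref{ineq_pd} have to be used jointly.
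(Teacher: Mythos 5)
The paper states this corollary without a written proof (it is presented as a consequence of Theorem~\ref{mainA}), so your attempt must stand on its own merits --- and it does not: the pivotal step is missing, and, worse, it is unprovable from the ingredients you use. You candidly defer the entire content of the argument to the claim $\sup_{n\ge0}\|F^n(z)\|_*\le Cr$ for $z$ in a small ball, and never prove it; but the real problem is structural. Power boundedness is assumed for $T=e^{i\theta}A-a\cdot\id$, \emph{not} for $A=DF(0)=e^{-i\theta}(T+a\cdot\id)$, so your renorming $\|x\|_*=\sup_n\|T^nx\|$ makes $T$ nonexpansive while leaving $\|A^nx\|_*$ free to grow like $|a|^n\|x\|_*$ (already when $T=0$). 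Concretely, take $X=\C$, $D=\C$, $F(z)=2z$. Then $\langle F(z),z^*\rangle=2|z|^2$, so \eqref{ineq_pd} holds \emph{with equality} for $\theta=\pi$, $a=-2$, $b=0$; moreover $F(0)=0$ and $T=e^{i\pi}\cdot 2-(-2)\cdot\id=0$ is trivially power bounded. Yet $F^n(z)=2^nz$, so every forward orbit of a nonzero point is unbounded, the set $U=\bigcup_{n\ge0}F^n(V)$ is unbounded for \emph{every} neighborhood $V$ of the origin, and no bounded neighborhood $U$ can satisfy $F(U)\subseteq U$ (a nonzero $z\in U$ would give $2^nz\in U$ for all $n$). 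This example also exposes the heuristic at the heart of your plan as empty: the $b=0$ inequality constrains only the rotated radial component $\Re e^{i\theta}\langle F(z),z^*\rangle$ in the \emph{original} norm, and the expanding linear map above satisfies it with equality, with no nonlinear part whatsoever available to ``point inward'' and absorb anything. So the delicate cancellation you hope to make quantitative along the orbit does not exist.

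The failure is instructive about where the hypotheses actually live. The power-boundedness of $T$ interacts with $F$ only through the map $G=e^{i\theta}F-a\cdot\id$, which by Lemma~\ref{L_pd-g} is an infinitesimal generator and, since $F(0)=0$, vanishes at the origin; by \eqref{geninf} this gives $\Re\langle G(z),z^*\rangle\le0$ on $\B$, i.e.\ flow-invariance of every ball $r\B$ under the \emph{semigroup generated by} $G$ --- not under the iterates of $F$, whose linear part $A=e^{-i\theta}(T+a\cdot\id)$ the hypotheses simply do not tame. Any viable argument must exploit this generator structure (together with the growth estimate of Theorem~\ref{mainA}(iii), which is what keeps the relevant neighborhood bounded), rather than the forward dynamics of $F$ in the original data; your orbit-union construction, as the one-dimensional linear example shows, cannot be repaired by a cleverer estimate. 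As written, your proposal establishes only what Theorem~\ref{mainA} already provides (boundedness of $F$ on each $r\B$, $r<1$) plus the formal properties of the set $U$; the invariance assertion, which is the entire content of the corollary, is untouched.
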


Let $G: \B \mapsto X$ be holomorphic. The map $G$ is called an
{\sl infinitesimal generator} if the Cauchy problem
\[
\begin{cases}
\stackrel{\bullet}{z}(t)=G(z(t))\\
z(0)=z_0
\end{cases}
\]
has a solution $[0,+\infty)\ni t\mapsto z(t)$ for all $z_0\in \B$.

We will see below (see Lemma~\ref{L_pd-g}) that a map $G:\B\mapsto
X$ is an infinitesimal generator on $\B$ if and only if it
satisfies the inequality
\[
\Re\langle G(z),z^*\rangle \le b(1-\|z\|^2),\quad
1-\varepsilon<\|z\|<1,
\]
for some $\varepsilon>0$ and $b\in\R$ (cf.,
formula~\eqref{geninf}). Therefore, we obtain from
Theorem~\ref{mainA} that that {\it each holomorphic generator
satisfies the following inequality:
\begin{equation}\label{estimate-g}
\begin{split}
& \|G(z) -G(0)\|\leq 4\|z\|^2 \|G(0)\|  +e \|z\|V(T)
-8m(T)\|z\|^2 \frac{1-\|z\|\ln2}{(1-\|z\|)^2}  \\
& < 4\|z\|^2 \|G(0)\|+
\frac{\beta\|z\|}{(1-\|z\|)^2}V(T),\quad\mbox{where }T=DG(0)\mbox{
and } \beta<3.3\,.
\end{split}
\end{equation}
} In case $X=\C^n$ with some norm $\|\cdot \|$, under the
conditions that $G(0)=0$ and $m(T)<0$, the last estimate has been
proved in \cite[Lemma 2.4]{DGHK2010} (generalizing the previous
result for the case $T=\id$ in \cite[Theorem~1.2]{GHK2002}). In
\cite[Theorem~8]{HRS} a similar growth estimate for holomorphic
maps $G: \B \mapsto X$ with bounded numerical range and such that
$G(0)=0, DG(0)=\id$ is given. Note that estimate~\eqref{estimate}
is more precise than those mentioned above.

%


Also, even in finite dimensional spaces, Theorem \ref{mainA} has some interesting applications. For instance, it can be used to give an answer to the following natural question.

\medskip
{\bf Question:} Let $\{G_n\}$ be a family of infinitesimal generators on the unit ball $\B$ of a complex Banach space. Which are the ``minimal'' possible conditions that guarantee that the family contains a convergent subsequence?
\medskip

In \cite[Lemma 2.2]{BCD2} it is shown that if $\B=\D$ is the unit disc in $\C$, then $\{G_n\}$ contains a convergent subsequence if there exist two points $z\neq w\in \D$ such that $\{G_n\}$ is equibounded at $z$ and $w$. The argument there is strongly based on the so-called Berkson-Porta formula. It is not clear whether and how a similar statement can be prove in higher dimensions. However, Theorem \ref{mainA} allows to prove the following:

\begin{corollary}\label{extract}
Let $D\subset \C^n$ be a bounded balanced convex domain. Let $\{G_n\}$ be a family of infinitesimal generators on $D$. Suppose that there exists $C>0$ such that
\[
\|G_n(0)\|+\|G_n'(0))\|\leq C,
\]
where here we denote by $\|G_n'(0)\|$ the operator norm of the differential of $G_n$ at $0$. Then there exists a subsequence of $\{G_{n}\}$ which converges uniformly on compacta to an infinitesimal generator.
\end{corollary}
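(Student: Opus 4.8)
The plan is to combine the growth estimate~\eqref{estimate-g} for generators with a normal families argument. First I would reduce to the case of a ball: since $D\subset\C^n$ is bounded, balanced and convex, its Minkowski functional is a norm $\|\cdot\|$ on $\C^n$ for which $D=\B=\{z:\|z\|<1\}$, and all the quantities appearing below ($V(\cdot)$, $m(\cdot)$ and the operator norm of the differential) are understood with respect to this norm. With $T_n:=DG_n(0)$, so that $\|T_n\|=\|G_n'(0)\|$, the hypothesis gives $\|G_n(0)\|\le C$ and $\|T_n\|\le C$ for all $n$.

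Next I would establish uniform boundedness on compacta. Applying~\eqref{estimate-g} to each generator $G_n$ and using that the numerical radius never exceeds the operator norm, $V(T_n)\le\|T_n\|=\|G_n'(0)\|\le C$ (indeed $|\langle T_nv,v^*\rangle|\le\|T_nv\|\,\|v^*\|=\|T_nv\|\,\|v\|$), one obtains, writing $r=\|z\|$,
\[
\|G_n(z)\|\le\|G_n(0)\|+4r^2\|G_n(0)\|+\frac{\beta r}{(1-r)^2}\,V(T_n)\le C\left(1+4r^2+\frac{\beta r}{(1-r)^2}\right).
\]
The right-hand side is finite and independent of $n$ on each set $\{\|z\|\le r_0\}$ with $r_0<1$, i.e.\ on each compact subset of $D$, so $\{G_n\}$ is locally uniformly bounded. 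Montel's theorem for $\C^n$-valued holomorphic maps then applies: local uniform boundedness together with the Cauchy estimates yields local equicontinuity, and Arzel\`a--Ascoli furnishes a subsequence $\{G_{n_k}\}$ converging uniformly on compacta to a holomorphic map $G:D\mapsto\C^n$.

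Finally I would check that the limit $G$ is again an infinitesimal generator, which I expect to be the main point. The key is to carry a \emph{uniform} generator constant through the limit. By Theorem~\ref{mainA}(i), applied in the generator case $a=0$, $\theta=0$ of~\eqref{ineq_pd}, each $G_n$ satisfies
\[
\Re\langle G_n(z),z^*\rangle\le\|G_n(0)\|\,(1-\|z\|^2)\le C\,(1-\|z\|^2)
\]
for all $z\in\B$ and every supporting functional $z^*$. Fixing $z$ and $z^*$ and letting $k\to\infty$, the continuity of $\langle\cdot,z^*\rangle$ gives $\Re\langle G(z),z^*\rangle\le C\,(1-\|z\|^2)$, so $G$ satisfies the inequality of Lemma~\ref{L_pd-g} with $b=C$ and is therefore an infinitesimal generator. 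The main obstacle lies precisely here: normality alone only produces a holomorphic limit, and it is the uniform bound $b_n=\|G_n(0)\|\le C$ provided by Theorem~\ref{mainA}(i)---in place of the a~priori possibly unbounded generator constants---that renders the defining dissipativity inequality stable under local uniform convergence.
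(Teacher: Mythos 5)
Your proposal is correct and follows essentially the same route as the paper: reduce to the unit ball of the Minkowski norm, use the growth estimate from Theorem~\ref{mainA} (with $V(T_n)\le\|T_n\|\le C$) to get local uniform boundedness and hence normality via Montel, and then pass to the limit in the dissipativity inequality to see the limit is a generator. The only cosmetic difference is that the paper passes to the limit directly in~\eqref{geninf}, whose right-hand side $\Re\langle G_{n_k}(0),z^*\rangle(1-\|z\|^2)$ also converges, whereas you replace it by the uniform bound $C(1-\|z\|^2)$ and invoke Lemma~\ref{L_pd-g}; both are valid.
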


\medskip

 This work started when the first and third authors where visiting the
Mittag-Leffler Institute during the program ``Complex Analysis and
Integrable Systems'' in Fall 2011. Both authors thank the
organizers and the Institute for the kind hospitality and the
atmosphere experienced there.

\section{Proofs}\label{two}

If $h$ is a holomorphic map on $\B$, we will write its expansion
at $0$ as $h(z)=h(0)+Tz+\sum_{j\geq 2} Q_j(z)$, where $T: X
\mapsto X$ is the Fr\'echet differential of $h$ at $0$, and $Q_j$
is a continuous homogeneous polynomial of degree $j$ on $X$ (see,
{\sl e.g.}, \cite{Muj}).

We begin by recalling (see \cite[Theorem p.95]{ARS} or \cite{RS})
that $G:\B \mapsto X$ is an infinitesimal generator if and only if
\begin{equation}\label{geninf}
\Re \langle G(z), z^\ast\rangle \leq \Re \langle
G(0),z^\ast\rangle(1-\|z\|^2)\qquad \mbox{for all }z\in \B.
\end{equation}

Note that if the previous formula holds for some $z^\ast$, then it holds for all $z^\ast$.

The following lemma follows immediately from \eqref{geninf}

\begin{lemma}\label{restrict-gen}
Let $G:\B \mapsto X$ be holomorphic. Then $G$ is an infinitesimal
generator if and only if for all $v\in X$ such that $\|v\|=1$ the
holomorphic map
\[
\D\ni \zeta \mapsto \langle G(\zeta v),v^\ast\rangle,
\]
is an infinitesimal generator in the unit disc $\D:=\{\zeta\in \C : |\zeta|<1\}$.
\end{lemma}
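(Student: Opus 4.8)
The plan is to deduce the lemma from the characterization~\eqref{geninf} together with the remark, already recorded above, that~\eqref{geninf} is insensitive to the particular choice of the support functional $z^\ast$. Fix $v\in X$ with $\|v\|=1$ and set $g_v(\zeta):=\langle G(\zeta v),v^\ast\rangle$. Every $z\in\B$ is of the form $\zeta v$ for some $\zeta\in\D$ and some $\|v\|=1$ (take $\zeta=\|z\|$ and $v=z/\|z\|$ when $z\neq 0$, and $\zeta=0$ otherwise), so it suffices to match, for each such $v$, the inequality~\eqref{geninf} for $G$ at the points of the slice $\{\zeta v:\zeta\in\D\}$ with the inequality~\eqref{geninf} for $g_v$ on $\D$. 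First I would recall that in the one–dimensional Hilbert space $\C$ the support functional of $\zeta$ is given by $\langle w,\zeta^\ast\rangle=w\overline{\zeta}$, so that $g_v$ is an infinitesimal generator on $\D$ precisely when $\Re\bigl(\overline{\zeta}\,g_v(\zeta)\bigr)\le \Re\bigl(\overline{\zeta}\,g_v(0)\bigr)(1-|\zeta|^2)$ for all $\zeta\in\D$.

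The key step is to select the right support functional at $z=\zeta v$. I claim that $z^\ast:=\overline{\zeta}\,v^\ast$ is admissible: indeed $\langle \zeta v,\overline{\zeta}v^\ast\rangle=|\zeta|^2\langle v,v^\ast\rangle$, whence $\Re\langle z,z^\ast\rangle=|\zeta|^2=\|z\|^2$, while $\|z^\ast\|=|\zeta|\,\|v^\ast\|=|\zeta|=\|z\|$, so $z^\ast$ satisfies the defining relation of a support functional for $z$. Since~\eqref{geninf} holds for some support functional if and only if it holds for all of them, I may test~\eqref{geninf} for $G$ at $z=\zeta v$ against this particular $z^\ast$. Pulling the scalar $\overline\zeta$ out of the pairing gives $\langle G(\zeta v),\overline{\zeta}v^\ast\rangle=\overline{\zeta}\,g_v(\zeta)$ and $\langle G(0),\overline{\zeta}v^\ast\rangle=\overline{\zeta}\,g_v(0)$, and, since $\|\zeta v\|^2=|\zeta|^2$, inequality~\eqref{geninf} for $G$ at $\zeta v$ reads exactly $\Re\bigl(\overline{\zeta}\,g_v(\zeta)\bigr)\le\Re\bigl(\overline{\zeta}\,g_v(0)\bigr)(1-|\zeta|^2)$. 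Thus the slice condition for $G$ coincides verbatim with the generation condition for $g_v$ recorded in the previous paragraph.

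To finish, I would let $v$ and $\zeta$ range freely: since the points $\zeta v$ exhaust $\B$, inequality~\eqref{geninf} holds for $G$ at every $z\in\B$ if and only if the displayed inequality holds for every $\|v\|=1$ and every $\zeta\in\D$, that is, if and only if each slice $g_v$ is an infinitesimal generator on $\D$. By~\eqref{geninf} applied to $G$ this is precisely the assertion of the lemma, and both implications are obtained simultaneously. The only genuine subtlety — and the step I would be most careful about — is the identification of the correct support functional $z^\ast=\overline{\zeta}\,v^\ast$ and the explicit invocation of the independence of~\eqref{geninf} from the choice of $z^\ast$; once these are in place the remaining manipulations are purely formal.
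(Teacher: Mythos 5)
Your proof is correct and follows exactly the route the paper intends: the paper dismisses this lemma with ``follows immediately from \eqref{geninf}'', and your argument simply makes that explicit by checking that $\overline{\zeta}\,v^\ast$ is a support functional at $\zeta v$ and invoking the stated independence of \eqref{geninf} from the choice of $z^\ast$. No issues.
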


In case of strongly convex domains in $\C^n$ the previous lemma holds for the restriction to any complex geodesic (see \cite[Proposition 4.5]{BCD}).

\begin{remark}\label{disco}
By \cite[Corollary 5]{ARS}, a holomorphic map $g: \D \mapsto \C$
is an infinitesimal generator if and only if
\[
g(\zeta)=g(0)-\overline{g(0)}\zeta^2-\zeta q(\zeta),
\]
where $\Re q(\zeta)\geq 0$ for all $\zeta \in \D$.
\end{remark}

To prove our theorem we also need the following lemmata.

\begin{lemma}\label{L_pd-g}
Let $F:\B\mapsto X$ be a pseudo-dissipative holomorphic map. Then
inequality \eqref{ineq_pd} holds for some suitable $\theta$, $a$ and
$b$ for all $z\in\B$. Moreover, the map $G:\B\mapsto X$
defined by
\begin{equation}\label{f_pd-g}
G(z)=e^{i\theta}F(z)-az
\end{equation}
is an infinitesimal generator on $\B$.
\end{lemma}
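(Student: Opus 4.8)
The plan is to reduce the whole statement to the one-dimensional situation by slicing, and then to combine the Berkson--Porta description of Remark~\ref{disco} with the minimum principle for harmonic functions. First I would record a pointwise normalization that makes the slicing work cleanly: if $\|v\|=1$ and $v^\ast$ is a support functional with $\Re\langle v,v^\ast\rangle=\|v\|^2=\|v^\ast\|^2=1$, then in fact $\langle v,v^\ast\rangle=1$, since $1=\Re\langle v,v^\ast\rangle\le|\langle v,v^\ast\rangle|\le\|v\|\,\|v^\ast\|=1$ forces equality and kills the imaginary part. Moreover, for $\zeta\in\D\setminus\{0\}$ the functional $\bar\zeta v^\ast$ is a legitimate support functional for $\zeta v$, because $\|\bar\zeta v^\ast\|=|\zeta|=\|\zeta v\|$ and $\langle\zeta v,\bar\zeta v^\ast\rangle=|\zeta|^2$. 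Fixing a unit vector $v$, I set $g(\zeta):=\langle F(\zeta v),v^\ast\rangle$ and $\tilde g(\zeta):=\langle G(\zeta v),v^\ast\rangle=e^{i\theta}g(\zeta)-a\zeta$, both holomorphic on $\D$. Evaluating \eqref{ineq_pd} at $z=\zeta v$ with the support functional $\bar\zeta v^\ast$ and using $\langle F(\zeta v),\bar\zeta v^\ast\rangle=\bar\zeta g(\zeta)$, I obtain $\Re(e^{i\theta}\bar\zeta g(\zeta))\le a|\zeta|^2+b(1-|\zeta|^2)$ for $1-\varepsilon<|\zeta|<1$, which rearranges to
\[
\Re\big(\bar\zeta\,\tilde g(\zeta)\big)\le b\,(1-|\zeta|^2),\qquad 1-\varepsilon<|\zeta|<1 .
\]

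Next I would produce the Berkson--Porta datum of $\tilde g$. Putting $p:=\tilde g(0)$, define the holomorphic function $q(\zeta):=\big(p-\bar p\zeta^2-\tilde g(\zeta)\big)/\zeta$ on $\D$; the numerator vanishes at the origin, so $q$ extends holomorphically, and by construction $\tilde g(\zeta)=p-\bar p\zeta^2-\zeta q(\zeta)$. Hence, by Remark~\ref{disco}, it suffices to prove $\Re q\ge 0$ on $\D$. A direct computation (using $\Re(\bar p\zeta)=\Re(\bar\zeta p)$) gives, for $\zeta\neq0$,
\[
|\zeta|^2\,\Re q(\zeta)=\Re(\bar\zeta p)\,(1-|\zeta|^2)-\Re\big(\bar\zeta\,\tilde g(\zeta)\big),
\]
and combining this with the boundary inequality above yields $|\zeta|^2\Re q(\zeta)\ge\big(\Re(\bar\zeta p)-b\big)(1-|\zeta|^2)\ge-(|p|+|b|)(1-|\zeta|^2)$ for $1-\varepsilon<|\zeta|<1$.

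The heart of the argument is then a minimum-principle estimate, which I expect to be the main obstacle, since the hypothesis only controls $q$ near $\partial\D$ while the conclusion is global. The function $u:=\Re q$ is harmonic on $\D$, and the previous bound gives $\min_{|\zeta|=r}u(\zeta)\ge-(|p|+|b|)(1-r^2)/r^2$ for $r$ close to $1$, so $\liminf_{r\to1^-}\min_{|\zeta|=r}u\ge0$. Applying the minimum principle to $u$ on each closed disc $\{|\zeta|\le r\}$ shows that $r\mapsto\min_{|\zeta|=r}u(\zeta)$ is non-increasing, whence $\inf_{\D}u=\lim_{r\to1^-}\min_{|\zeta|=r}u\ge0$. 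Thus $\Re q\ge0$ on $\D$, and by Remark~\ref{disco} the map $\tilde g$ is an infinitesimal generator on $\D$. Since $v$ was an arbitrary unit vector, Lemma~\ref{restrict-gen} yields that $G=e^{i\theta}F-a\,\id$ is an infinitesimal generator on $\B$.

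Finally, I would feed this back into \eqref{geninf}. Applied to $G$, with $G(0)=e^{i\theta}F(0)$ and $|\langle G(0),z^\ast\rangle|\le\|z^\ast\|\,\|F(0)\|=\|z\|\,\|F(0)\|$, inequality \eqref{geninf} reads $\Re\big(e^{i\theta}\langle F(z),z^\ast\rangle\big)-a\|z\|^2\le\|z\|\,\|F(0)\|(1-\|z\|^2)$; since $\|z\|\le1$ and $1-\|z\|^2\ge0$, this is precisely \eqref{ineq_pd} for all $z\in\B$ with the same $\theta$ and $a$ and with $b=\|F(0)\|$. The only delicate point remains the passage from ``$\Re q\ge0$ near the boundary'' to ``$\Re q\ge0$ on all of $\D$''; the monotonicity of $r\mapsto\min_{|\zeta|=r}\Re q$ is exactly what bridges the annulus hypothesis \eqref{ineq_pd} and the global conclusion.
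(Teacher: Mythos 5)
Your proof is correct, and after the common slicing reduction it takes a genuinely different route at the crucial step. Both you and the paper fix a unit vector $v$, pass to $\tilde g(\zeta)=\langle G(\zeta v),v^\ast\rangle$, observe that \eqref{ineq_pd} becomes $\Re\bigl(\bar\zeta\,\tilde g(\zeta)\bigr)\le b(1-|\zeta|^2)$ on the annulus $1-\varepsilon<|\zeta|<1$, and invoke Lemma~\ref{restrict-gen} at the end. Where the paper shows that $\tilde g$ generates by a homotopy and the argument principle --- proving that $\zeta\mapsto \zeta-w-r\tilde g(\zeta)$ has exactly one zero in $\D$ for every $w\in\D$ and $r>0$, and then citing the range-condition characterization (Proposition 3.3.1 of the book of Shoikhet) --- you instead construct the Berkson--Porta-type datum $q(\zeta)=\bigl(\tilde g(0)-\overline{\tilde g(0)}\zeta^2-\tilde g(\zeta)\bigr)/\zeta$ directly and verify the identity $|\zeta|^2\Re q(\zeta)=\Re(\bar\zeta p)(1-|\zeta|^2)-\Re\bigl(\bar\zeta\,\tilde g(\zeta)\bigr)$, which together with the annulus inequality gives $\Re q\ge -(|p|+|b|)(1-|\zeta|^2)/|\zeta|^2$ near $\partial\D$; the monotonicity of $r\mapsto\min_{|\zeta|=r}\Re q$ (minimum principle for harmonic functions) then propagates $\Re q\ge 0$ to all of $\D$, and Remark~\ref{disco} concludes. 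Your version buys self-containedness: it uses only the representation already quoted from [ARS] plus elementary harmonic function theory, and it explicitly exhibits the function $q$ that the paper needs again in Lemma~\ref{poly-ort}. The paper's version buys generality, since the range-condition argument does not depend on the one-dimensional representation formula. Your final step --- feeding $G$ back into \eqref{geninf} and estimating $\Re\langle G(0),z^\ast\rangle\le\|F(0)\|\,\|z\|$ to recover \eqref{ineq_pd} with $b=\|F(0)\|$ --- coincides with the paper's. The only point worth making explicit is the one you already flagged: \eqref{ineq_pd} must be applied with the particular support functional $\bar\zeta v^\ast$ at the point $\zeta v$, which is legitimate for the same reason it is in the paper's proof (the defining conditions for a support functional are checked directly, and the relevant inequalities are insensitive to the choice of $z^\ast$).
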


\begin{proof}
It follows from \eqref{f_pd-g} and \eqref{ineq_pd} that $G$
satisfies the inequality
\begin{equation}\label{ineq_G}
\Re \langle G(z), z^*\rangle \le b\left(1- \|z\|^2 \right)
\end{equation}
for all $z$ with $1-\varepsilon<\|z\|<1$.

Fix any $v\in\partial\B$ and consider the function $g$ defined by
\[
g(\zeta)=\langle G(\zeta v),v^*\rangle,\quad \zeta\in\D.
\]
Actually, according to Lemma~\ref{restrict-gen}, we have to show
that $g$ is an infinitesimal generator on $\D$. To do this, fix
any $w\in\D$, choose $s\in\R$ with $\max(1-\varepsilon,|w|)<s<1$
and define
\[
h_{r,t}(\zeta)=\zeta- t\left(w+rg(\zeta)\right),\quad r>0,\ 0\le
t\le1.
\]
For all $\zeta$ on the circle $|\zeta|=s,$ we have
by~\eqref{ineq_G}
\begin{eqnarray*}
\Re\Bigl(h_{r,t}(\zeta)\bar\zeta\Bigr)=|\zeta|^2
-t\Re(w\bar\zeta)-tr\Re\langle G(\zeta)v,(\zeta v)^*\rangle \ge
s^2-ts|w| -trb(1-s^2).
\end{eqnarray*}
It is clear that for $s$ close enough to $1$, there is $\delta>0$
such that $\Re\Bigl(h_{r,t}(\zeta)\bar\zeta\Bigr)>\delta$ on the
circle $|\zeta|=s$. Therefore, $h_{r,t}$ has no null points on
this circle. Then by the logarithmic residue formula, the number
of null points is a continuous in $t$ function. Since this
function takes natural values, it is constant. Because of
$h_{r,0}(\zeta)=\zeta$, we conclude that the function
$h_{r,1}=\zeta- w-rg(\zeta)$ has a unique null point in~$\D$. Then
Proposition~3.3.1 in \cite{SD} implies that $g$ is a generator.

Now, by~\eqref{geninf}, we have the following inequality:
\[
\Re\langle e^{i\theta}F(z),z^*  \rangle \le a\|z\|^2
+\|F(0)\|(1-\|z\|^2),\quad z\in\B,
\]
which completes the proof.
\end{proof}

\begin{lemma}\label{poly-ort}
Let $G:\B \mapsto X$ be an infinitesimal generator with expansion
$G(z)=G(0)+Tz+\sum_{j\geq 2}Q_j(z)$. Let $v\in X$ be such that
$\|v\|=1$ and let $v^\ast\in X^\ast$ be such that $\langle v,
v^\ast\rangle=\|v\|=\|v^\ast\|=1$. Then
\begin{equation}\label{estim}
\Re \langle Tv, v^\ast\rangle \leq 0.
\end{equation}
Moreover, if $\Re \langle Tv, v^\ast\rangle =0$ then
\begin{equation*}
\begin{split}
\langle Q_2(v), v^\ast\rangle &=-\overline{\langle G(0), v^\ast\rangle} ,\\
\langle Q_j( v), v^\ast\rangle &=0 ,\quad j\geq 3.
\end{split}
\end{equation*}
\end{lemma}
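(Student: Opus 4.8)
The plan is to reduce everything to the one-dimensional situation via Lemma~\ref{restrict-gen} and then exploit the explicit description of disc generators in Remark~\ref{disco}. Fix $v\in X$ with $\|v\|=1$ and a supporting functional $v^\ast$ as in the statement, and set $g(\zeta):=\langle G(\zeta v),v^\ast\rangle$ for $\zeta\in\D$. By Lemma~\ref{restrict-gen}, $g$ is an infinitesimal generator on $\D$. Since each $Q_j$ is homogeneous of degree $j$, we have $G(\zeta v)=G(0)+\zeta Tv+\sum_{j\geq 2}\zeta^j Q_j(v)$, so pairing with $v^\ast$ gives the Taylor expansion $g(\zeta)=\langle G(0),v^\ast\rangle+\zeta\langle Tv,v^\ast\rangle+\sum_{j\geq 2}\zeta^j\langle Q_j(v),v^\ast\rangle$. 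In particular $g(0)=\langle G(0),v^\ast\rangle$, the linear coefficient is $\langle Tv,v^\ast\rangle$, and the coefficient of $\zeta^j$ is $\langle Q_j(v),v^\ast\rangle$.

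Next I would apply Remark~\ref{disco} to write $g(\zeta)=g(0)-\overline{g(0)}\zeta^2-\zeta q(\zeta)$ for some holomorphic $q:\D\mapsto\C$ with $\Re q(\zeta)\geq 0$ throughout $\D$. Expanding $q(\zeta)=\sum_{k\geq 0}c_k\zeta^k$ and comparing coefficients in the two expressions for $g$ yields $\langle Tv,v^\ast\rangle=-c_0=-q(0)$, next $\langle Q_2(v),v^\ast\rangle=-\overline{g(0)}-c_1$, and $\langle Q_j(v),v^\ast\rangle=-c_{j-1}$ for $j\geq 3$. Since $\Re q(0)=\Re c_0\geq 0$, the first relation immediately gives $\Re\langle Tv,v^\ast\rangle=-\Re c_0\leq 0$, which is inequality~\eqref{estim}.

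For the rigidity part, suppose $\Re\langle Tv,v^\ast\rangle=0$, that is $\Re q(0)=0$. Then the harmonic function $\Re q$ attains its minimum value $0$ at the interior point $0$ of $\D$, so by the minimum principle $\Re q\equiv 0$; consequently $q$ is a holomorphic map with values in the imaginary axis and hence constant, $q\equiv c_0$. Thus $c_k=0$ for every $k\geq 1$, and substituting into the coefficient relations above gives $\langle Q_2(v),v^\ast\rangle=-\overline{g(0)}=-\overline{\langle G(0),v^\ast\rangle}$ and $\langle Q_j(v),v^\ast\rangle=0$ for all $j\geq 3$, as claimed.

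The only real subtlety is the passage from the scalar identity $\Re q(0)=0$ to $q$ being constant: one must use that $\Re q\geq 0$ on all of $\D$ so that the minimum principle (equivalently, the maximum principle applied to $-\Re q$) forces $\Re q$ to vanish identically, after which the Cauchy--Riemann equations give constancy of $q$. Everything else is bookkeeping of Taylor coefficients, made routine by the homogeneity of the $Q_j$ and the one-variable structure provided by Lemma~\ref{restrict-gen} and Remark~\ref{disco}.
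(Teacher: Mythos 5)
Your proposal is correct and follows essentially the same route as the paper: restrict to the disc via Lemma~\ref{restrict-gen}, invoke the representation $g(\zeta)=g(0)-\overline{g(0)}\zeta^2-\zeta q(\zeta)$ from Remark~\ref{disco}, compare Taylor coefficients, and in the equality case conclude from $\Re q(0)=0$ and $\Re q\geq 0$ that $q$ is an imaginary constant. The only difference is that you spell out the minimum-principle step that the paper leaves implicit, which is a harmless (indeed welcome) elaboration.
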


\begin{proof}
let $g: \D \mapsto \C$ be defined as $g(\zeta):=\langle G(\zeta
v), v^\ast\rangle$. By Lemma \ref{restrict-gen} the holomorphic
map $g$ is an infinitesimal generator in $\D$. By Remark
\ref{disco} we can write
$g(\zeta)=g(0)-\overline{g(0)}\zeta^2-\zeta q(\zeta)$, where $\Re
q(\zeta)\geq 0$ for all $\zeta \in \D$. From this we have
\begin{equation}\label{q0}
-q(0)=g'(0)=\langle Tv, v^\ast \rangle,
\end{equation}
and \eqref{estim} follows from $\Re q(0)\geq 0$.

Now, let $q(\zeta)=q(0)+\sum_{j\geq 1} a_j \zeta^j$. Expanding $g$ we see that
\begin{equation}\label{expansion}
\begin{split}
g(0)-q(0)\zeta& -(a_1+\overline{g(0)})\zeta^2 - \sum_{j\geq 3} a_{j-1} \zeta^{j}=g(\zeta)\\&=\langle G(0), v^\ast\rangle + \langle Tv, v^\ast\rangle \zeta +\sum_{j\geq 2} \langle Q_j(v),v^\ast\rangle \zeta^j,
\end{split}
\end{equation}
from which it follows that
\begin{equation}\label{equal}
\begin{split}
\langle Q_2(v), v^\ast\rangle &=-\left(a_1+\overline{\langle G(0), v^\ast\rangle}\right),\\
\langle Q_j(v), v^\ast\rangle &= -a_{j-1}, \quad j\geq 3.
\end{split}
\end{equation}
If $\Re \langle Tv, v^\ast \rangle=0$ then $\Re q(0)=0$, hence $q(\zeta)=i a$ for some $a\in \R$ and $a_j=0$ for all $j\geq 1$, and the statement follows.
\end{proof}

Now we are in good shape to prove our main result:

\begin{proof}[Proof of Theorem \ref{mainA}]
Assertion (i) is already proven in Lemma~\ref{L_pd-g}. Consider
now the map $G:\B\mapsto X$ defined by $G(z)=e^{i\theta}F(z)-az$.
Then $T(=DG(0))=e^{i\theta}A- a\cdot\id$, and assertion~(ii)
follows immediately  by Lemma~\ref{poly-ort}.

In order to prove assertion~(iii), by using Lemma~\ref{L_pd-g}, it
is sufficient to prove inequality~\eqref{estimate-g} for the same
map $G$ being an infinitesimal generator on~$\B$.

For a fixed $v\in X$ with $\|v\|=1$, let $v^\ast\in X^\ast$ be
such that $\langle v, v^\ast\rangle=\|v\|=\|v^\ast\|=1$. It
follows from Lemma~\ref{poly-ort} that $\Re \langle Tv,
v^\ast\rangle \le 0$. Let $g: \D \mapsto \C$ be defined as
$g(\zeta):=\langle G(\zeta v), v^\ast\rangle$. By
Lemma~\ref{restrict-gen}, the holomorphic map $g$ is an
infinitesimal generator in $\D$. According to Remark~\ref{disco},
$g(\zeta)=g(0)-\overline{g(0)}\zeta^2-\zeta q(\zeta)$, where $\Re
q(\zeta)\geq 0$ for all $\zeta \in \D$. The Carath\'eodory
inequalities (\cite{Ca}, see also \cite{Ai}) imply that $|a_j|\leq
2\Re q(0)$ for all $j\geq 1$. Now by \eqref{equal} and \eqref{q0}
we get
\begin{equation}\label{stime2}
\begin{split}
|\langle Q_2(v), v^\ast\rangle| \leq& \|G(0)\| -2\Re \langle Tv, v^\ast\rangle,\\
|\langle Q_j(v), v^\ast\rangle| \leq& -2\Re\langle Tv,
v^\ast\rangle, \quad j\geq 3.
\end{split}
\end{equation}
To proceed, we need L. Harris' inequalities \cite{H}. Namely, if
$P_m: X \mapsto X$ is a continuous homogeneous polynomial of
degree $m\geq 1$ then $\|P_m\|\leq k_m V(P_m)$, where
$k_m=m^{m/(m-1)}$ for $m\geq 2$, $k_1=e$ and
\[
V(P_m)=\sup \{ |\langle P_m(v), v^\ast\rangle |: \|v\|=1\}.
\]
These estimates together with~\eqref{stime2} imply that
\begin{equation}\label{stime}
\begin{split}
\|Tz\|\leq & e \|z\|V(T),\\
\|Q_2(z)\|\leq& k_2\|z\|^2\|G(0)\|-2k_2\|z\|^2 m(T),\\
\|Q_j(z)\|\leq& -2k_j\|z\|^j m(T), \quad j\geq 3.
\end{split}
\end{equation}
Now for all $z\in \B$ we have by \eqref{stime}
\begin{equation}\label{estimate-pol}
\begin{split}
&\|G(z)- G(0)\|\leq  \|Tz\|+\|Q_2(z)\|  +\sum_{j\geq 3}\|Q_j(z)\|\\
&\leq  e \|z\|V(T)  +k_2\|z\|^2\|G(0)\|-2k_2\|z\|^2 m(T)  -2m(T) \sum_{j\geq 3}k_j\|z\|^j \\
&\leq  e \|z\|V(T)  +4\|z\|^2\|G(0)\|  -2m(T) \sum_{j\geq 2} k_j
\|z\|^j.
\end{split}
\end{equation}
To this end, consider the function $k(x)=x^{x/(x-1)},\ x\ge2$.
Since $k''(x)<0$, one concludes that $k(x)\le
k'(2)(x-2)+k(2)=k'(2) x+4-2k'(2)$ with $k'(2)=4(1-\ln 2)$.
Applying this simple fact, we get from~\eqref{estimate-pol}
\begin{equation*}
\begin{split}
\|G(z) - G(0)\|&\leq   e \|z\|V(T)  +4\|z\|^2\|G(0)\| -2m(T) \sum_{j\geq 2}\left( k'(2)  j+4-2 k'(2) \right)\|z\|^j \\
&\le  e \|z\|V(T)  +4\|z\|^2\|G(0)\| -2m(T)\|z\|^2 \left(\frac{ k'(2) (2-\|z\|)}{(1-\|z\|)^2} + \frac{4-2 k'(2) }{1-\|z\|}  \right)\\
&= 4\|z\|^2\|G(0)\|+e\|z\|V(T)-8m(T)\|z\|^2
\frac{1-\ln2\cdot\|z\|}{(1-\|z\|)^2}\,.
\end{split}
\end{equation*}

\end{proof}

Also, we can prove Corollary \ref{extract}:

\begin{proof}[Proof of Corollary \ref{extract}]
By hypothesis, $D$ is the unit ball in $\C^n$ for the Minkowski
norm defined by $D$. Therefore, Theorem \ref{mainA} applies and
Montel's theorem implies that the family $\{G_n\}$ is normal.
Thus, since it cannot be compactly divergent because it is bounded
at the origin, there exists a subsequence $\{G_{n_k}\}$ which
converges uniformly on compacta to a holomorphic map $G: D \mapsto
\C^n$. Applying \eqref{geninf} to each $G_{n_k}$ and passing to
the limit, we get the result.
\end{proof}

\end{document}